
\documentclass[onefignum,onetabnum]{siamonline190516}


\usepackage{amsfonts,amssymb,amsmath}
\usepackage{multicol}
\usepackage{graphicx}
\usepackage{epstopdf}
\usepackage{algorithmic}
\ifpdf
  \DeclareGraphicsExtensions{.eps,.pdf,.png,.jpg}
\else
  \DeclareGraphicsExtensions{.eps}
\fi

\usepackage{tikz}

\usepackage{enumitem}
\setlist[enumerate]{leftmargin=.5in}
\setlist[itemize]{leftmargin=.5in}


\newsiamremark{remark}{Remark}
\newsiamremark{hypothesis}{Hypothesis}
\crefname{hypothesis}{Hypothesis}{Hypotheses}
\newsiamthm{claim}{Claim}

\headers{Algebraic Experimental Design}{Dimitrova, Fredrickson, Rondoni, Stigler, Veliz-Cuba}

\title{Algebraic Experimental Design: Theory and Computation\thanks{Submitted to the editors \today.
\funding{Dimitrova, Fredrickson, and Rondoni were partially supported by NSF Award DMS-1419038; Stigler was partially supported by NSF Award DMS-1720335; Veliz-Cuba was partially supported by the Simons Foundation grant 516088.}}}

\author{
Elena S. Dimitrova%
\thanks{Department of Mathematics, California Polytechnic State University, San Luis Obispo, CA 93407 
(\email{edimitro@calpoly.edu}).} 
\and 
Cameron H. Fredrickson%
\thanks{Department of Mathematics, California Polytechnic State University, San Luis Obispo, CA 93407 
(\email{chfredri@calpoly.edu}).}
\and 
Nicholas A. Rondoni%
\thanks{Department of Applied Mathematics, University of California, Santa Cruz, CA 95064 
(\email{nrondoni@ucsc.edu}).}
\and 
Brandilyn Stigler%
\thanks{Department of Mathematics, Southern Methodist University, Dallas, TX 75275 
(\email{bstigler@smu.edu}).}
\and 
Alan Veliz-Cuba%
\thanks{Department of Mathematics, University of Dayton, Dayton, OH 45469 
(\email{avelizcuba1@udayton.edu}).}
}

\usepackage{amsopn}

\def\D{\mathcal{D}}
\def\F{\mathbb{F}}
\def\G{\mathcal{G}}

\def\M{\mathcal{M}}

\def\p{\mathfrak{p}}
\def\s{\mathbf{s}}
\def\m2{\textit{Macaulay2}}

\renewcommand{\>}{\rangle}
\def\Not{\overline}

\DeclareMathOperator{\Mod}{Mod}
\DeclareMathOperator{\supp}{supp}

\def\F{\mathbb F}

\newtheorem{example}[theorem]{Example}

\usepackage{xcolor}
\newcounter{comments}
\setcounter{comments}{0}
\definecolor{Red}{rgb}{0.8,0,0}
\definecolor{Green}{rgb}{0,0.7,0}
\definecolor{Blue}{rgb}{.2,.2,1}


\ifpdf
\hypersetup{
  pdftitle={Algebraic Experimental Design},
  pdfauthor={Dimitrova, Fredrickson, Rondoni, Stigler, Veliz-Cuba}
}
\fi




\begin{document}

\maketitle

\begin{abstract}
Over the past several decades, algebraic geometry has provided innovative approaches to biological experimental design that resolved theoretical questions and improved computational efficiency. However, guaranteeing uniqueness and perfect recovery of models are still open problems. In this work we study the problem of uniqueness of wiring diagrams. We use as a modeling framework polynomial dynamical systems and utilize the correspondence between simplicial complexes and square-free monomial ideals from Stanley-Reisner theory to develop theory and construct an algorithm for identifying input data sets $V\subset \mathbb F_p^n$ that are guaranteed to correspond to a unique minimal wiring diagram regardless of the experimental output. We apply the results on a tumor-suppression network mediated by epidermal derived
growth factor receptor and demonstrate how careful experimental design decisions can lead to a unique minimal wiring diagram identification. One of the insights of the theoretical work is the connection between the uniqueness of a wiring diagram for a given $V\subset \mathbb F_p^n$ and the uniqueness of the reduced Gr\"obner basis of the polynomial ideal $I(V)\subset \mathbb F_p[x_1,\ldots, x_n]$. We discuss existing results and introduce a new necessary condition on the points in $V$ for uniqueness of the reduced Gr\"obner basis of $I(V)$. 
These results also point to the importance of the relative proximity of the experimental input points on the number of minimal wiring diagrams, which we then study computationally. We find that there is a concrete heuristic way to generate data that tends to result in fewer  minimal wiring diagrams.

\end{abstract}

\begin{keywords}
Design of experiments, biological network inference, polynomial dynamical systems, ideals of points, wiring diagrams, Gr\"obner bases, Stanley-Reisner ideals.

\end{keywords}

\begin{AMS}
  11T06, 92C42, 37N25, 13F55
\end{AMS}

\section{Introduction}


The abundance of numerous substantial data sets from laboratory experiments and myriad diverse methods for modeling and analysis render network inference  a critical component of  systems biology research; for a recent example, see~\cite{grand}. A vital process linked to inference is experimental design, which optimizes data generation and collection for effective prediction of network structure.  While traditional experimental design is rooted in statistical methods~\cite{litwin}, algebraic geometry has offered innovative approaches to experimental design \cite{wynn,he-unique-gbs}. In fact a fractional factorial design can be viewed as a set of $n$-tuples over a finite field
and a special class of discrete models called \emph{polynomial dynamical systems} can be used to capture all models which fit the design points for a network with $n$ nodes.

Associated to a polynomial dynamical system is a directed graph called the \emph{wiring diagram}, which encodes the topology (connectivity) of the network. While the wiring diagram represents only a static picture of the network, knowledge of the connectivity is crucial for studying network robustness, regulation, and control strategies in order to develop, for example, therapeutic interventions~\cite{tan2013, Wang:2013aa} and drug delivery strategies~\cite{yousefi2012,Lee:2012aa}, or to understand the mechanisms for the spread of an infectious disease~\cite{Madrahimov:2013dq,PMID:20478257}. Moreover, it has been demonstrated that the role of network connectivity goes beyond static properties and can in fact dictate certain dynamical properties and be used for their control~\cite{jarrah2010dynamics,campbell,veliz2011reduction,zamal,wu,albert,sontag2008effect,murrugarra15, murrugarra19}. 

In this work, we will develop theory and algorithms for experimental design which reduce the size of the space of possible wiring diagrams. 
The central object of study is a \emph{minimal set} for a node $x$,  that is a set of variables representing the incoming edges to $x$ in the wiring diagram.  Each minimal set, or \emph{minset} for short, has the property that there exists a polynomial in those variables that fits the data (design points) and there is no such polynomial for any proper subset.
Specifically, we aim to find properties on input-output data $(V,T)$ that guarantees that it has a unique minimal set. In this way, we contribute a number of distinct results. 
When only the design points, referred to as \emph{inputs},~$V$ are known, we prove
 a necessary and sufficient condition on $V$ (Theorem~\ref{unique-minset}); a necessary condition on $V$ (Theorem~\ref{diagonal}); and a sufficient condition on $V$ (Corollary~\ref{ugb-minset}).
Each of these conditions on $V$ guarantees that for any corresponding output assignment~$T$, the input-output data set $(V,T)$ has a unique minimal set.  Furthermore, when both inputs~$V$ and outputs~$T$ are known, we provide a sufficient condition on polynomial functions which fit  $(V,T)$ in Theorem~\ref{unique-nf-minset}. 

In parallel, this work has uncovered interesting results for ideals of points.  While it is known that for every monomial order $\prec$ there is a unique reduced Gr\"obner basis $G_\prec$ for $I(V)$, there are cases when the Gr\"obner basis is the same across all monomial orders: that is, there exists a generating set $G$ for $I(V)$ such that for all monomial orders $\prec$ the associated reduced Gr\"obner basis $G_\prec = G$.  In this case we say that \emph{$I(V)$ has a unique reduced Gr\"obner basis for all monomial orders.}  We prove a necessary condition on fixed inputs $V$ (Corollary~\ref{ugb-diag-free}); a necessary condition on arbitrary outputs $T$  (Corollary~\ref{ugb-minset}); and a necessary and sufficient condition on polynomial functions which fit $(V,T)$ for any output $T$ (Corollary~\ref{nf-ugb}).

In an effort to provide guidance for designing experiments, we performed computational experiments that suggest the following rubric: having data with small Hamming distance between points results in fewer minsets than data with large Hamming distance between points.  Moreover we  provide computational evidence that design points generated using a \emph{small-distance scheme} result in fewer minsets than randomly generated points.


The paper is organized as follows. We provide the relevant background in \Cref{sec:background}.  Theoretical results are in \Cref{sec:main}, while computational results are in \Cref{sec:experiments}.  We close with a discussion in \Cref{sec:conclusions}.

\section{Background}
\label{sec:background}

Much of the language in this section is taken from \cite{macauley-stigler}.

Discrete models have been used extensively and there is evidence
that they provide a good framework for a variety of applications, e.g. \cite{davidson,
albert, thomas91, laubenbacher04, dimitrova-zardecki}. Such models are collections of
functions defined over a finite state set $X$ and can be described
using polynomials when the state set size is constrained to a power
of a prime. In the latter case, discrete models are often referred
to as polynomial models and can be written as $n$-tuples of
polynomial functions, one for each node in the network, i.e. $f=(f_1,\ldots,f_n): X^n\to X^n$, where $f_i:X^n\to X$ is a polynomial which determines the behavior of node (variable) $x_i$.  Examples of
polynomial models are Boolean networks ($X=\F_2$) and more generally
\emph{polynomial dynamical systems} (PDSs) over finite fields
($X=\F_p$).

Specifically a \emph{polynomial dynamical system} over $F=\F_p$ is a polynomial map $f:F^n\rightarrow F^n$ where $f=(f_1,\ldots,f_n)$ and each coordinate function $f_i:F^n\rightarrow F$ is a polynomial in $F[x_1,\ldots ,x_n]$. We say that $f$ \emph{fits} the input-output data $D=\{(s_1,t_1),\ldots ,(s_m,t_m)\}\subset F^n\times F^n$ if $f(s_j)=t_j$ for each $1\leq j\leq m$. 

The \emph{monomials} or \emph{terms} of a
polynomial model represent interactions among the nodes in a
network, whereas the coefficient of a monomial can be interpreted as the
strength or weight of the associated interaction.
The \emph{support} of a polynomial $f\in k[x_1,\ldots,x_n]$, denoted $supp(f)$, is the collection of variables that appear in $f$. 

\begin{definition}\label{wd}

A \emph{wiring diagram} of a PDS $f=(f_1,\ldots,f_n)$ is a directed graph $W=(L,E)$ where $|L|=n$, the vertices are labeled as the $n$ variables, and there is a directed edge in $E$ $x_i\rightarrow x_j$ iff $x_i\in supp(f_j)$. 
\end{definition}

Monomials in the polynomial ring $\mathbb{F}_p[x_1,\ldots ,x_n]$ are written as  $x^\alpha=x_1^{\alpha_1}x_2^{\alpha_2}\cdots x_n^{\alpha_n}$, with exponent vector $\alpha=(\alpha_1,\ldots ,\alpha_n)\in \mathbb Z^n$. A \emph{monomial ideal} $I\subseteq \mathbb{F}_p[x_1,\dots,x_n]$ is an ideal generated by monomials, written as $I=\langle x^{\alpha},x^{\beta},\dots\rangle$. A monomial $x^\alpha$ is \emph{square free} if each $\alpha_i\in\{0,1\}$. A monomial ideal is a \emph{Stanley-Reisner ideal} if it can be generated by square-free monomials. 

A \emph{simplicial complex} over a finite set $X$ is a collection $\Delta$ of subsets of $X$ that are closed under the operation of taking subsets. That is, if $\beta\in\Delta$ and $\alpha\subseteq\beta$, then $\alpha\in\Delta$. The elements in $\Delta$ are called \emph{simplices} or \emph{faces}. Given an ideal $I$, we define the simplicial complex 
	\[
    \Delta_{I^c}=\{\alpha\mid x^\alpha\not\in I\},
    \]
 and given a simplicial complex $\Delta$ on $X=[n]=\{1,\ldots,n\}$, we define the square-free monomial ideal  
\[	
I_{\Delta^C}=\<x^\alpha\mid\alpha\not\in\Delta\>,
\]
which is the Stanley-Reisner ideal of $\Delta$. 

Consider a set  $V=\{\mathbf{s_1},\ldots,\mathbf{s_m}\}\subseteq \F_p^n$ of distinct input vectors, and a multiset $T=\{t_1,\ldots, t_m\}$ of output values from $\F_p$. We call
\[
\mathcal{D}= \{(\mathbf{s_1},t_1),\dots,(\mathbf{s_m},t_m)\}\subseteq\F^n\times\F
\]
the \emph{input-output data set}, where inputs may be stimuli applied to the network and outputs are its responses. A function $f\colon\F_p^n\to\F_p$ is said to \emph{fit the data} if $f(\mathbf{s})=t$ for all $(\mathbf{s},t)\in\mathcal{D}$. 
The \emph{model space} of $\mathcal{D}$ is the set of all functions that fit the data, i.e.
\[
\Mod(\mathcal{D})=\{f\colon\mathbb{F}_p^n\to\mathbb{F}_p^n\mid f(\mathbf{s})=t,\;\text{for all }(\mathbf{s},t)\in\mathcal{D}\}.
\]

For ease of presentation, we will focus on the wiring diagram of an individual node~$x_i$, that is, the edge set of the graph will be $E_{x_i}=\{(t,x_i)\mid t\in \supp(f_i)\}$. The union of the wiring diagrams of all nodes is, of course, the entire wiring diagram $W$. 

In \cite{JLSS}, the authors developed an algorithm for constructing all 
wiring diagrams based on sets of input-output data.  The method encoded certain coordinate changes  in input data as square-free monomials, generated a monomial ideal from these monomials, and used Stanley-Reisner theory to  decompose the ideal into primary components. These primary components were named \emph{minimal sets} or \emph{minsets} for short. A minset is a  set $S$ of variables so that there is a function in terms of those variables that fits the given data and there is no such function on proper subsets of $S$ (a formal definition will be presented as Definition~\ref{minset}). A 
wiring diagram for a specific node~$x$ can be constructed by drawing edges from the variables in the minset towards $x$. Details are provided in the following definitions and results from~\cite{JLSS}.

For every pair of distinct input vectors $\s=(s_1,\dots,s_n)$ and $\s'=(s'_1,\dots,s'_n)$ in $V$, we can encode the coordinates in which they differ by a square-free monomial 
\[
m(\s,\s')=\prod_{s_i\neq s'_i}x_i.
\]
Let $\M(V)$ be the set of all such monomials from $V$, that is,
\begin{equation}\label{sf-mon}
\M(V)=\{m(\s,\s')\mid\s,\s'\in V,\;\s\neq\s'\}.
\end{equation}
If distinct input vectors $\s,\s'\in V$ have different output values, $t\neq t'$, then any function $f\colon\F_p^n\to\F_p$ satisfying $f(\s)=t$ and $f(\s')=t'$ must depend on at least one of the variables in $m(\s,\s')$. In this case, we say that the support of $m(\s,\s')$, i.e., the set of variables that appear in it, is a \emph{non-disposable set} of $\D$. 

For a fixed data set $\D$, the non-disposable sets in the power set $2^{[n]}$, where $[n]=\{1,\ldots, n\}$,  are clearly closed under unions. We call all other sets \emph{disposable}, i.e. $\alpha\subseteq [n]$ is a disposable set of $\D$ if and only if there is some $f\in\Mod(\D)$ that depends only on the variables \emph{not} in~$\alpha$. Equivalently, its support satisfies
$supp(f)\subseteq\overline{\alpha}=[n]\setminus\alpha$. It is easy to see that disposable sets are closed under intersections. As such we can define the abstract \emph{simplicial complex of disposable sets} of~$\D$ to be
\[
\Delta_\D=\{\alpha\subseteq[n]\mid \alpha\text{ is a disposable set of } \D\}.
\]
If we canonically identify square-free monomials with subsets of $[n]$, then the \emph{Alexander dual} of~$\Delta_\D$ is the Stanley-Reisner ideal 
\[
I_{\Delta^c_\D}=\<x^\alpha\mid \alpha\not\in\Delta_\D\>=\<m(\s,\s')\mid t\neq t'\>,
\]
which is called the \emph{ideal of non-disposable sets}. By the Alexander duality, the simplicial complex of disposable sets is 
\[
\Delta_\D=\{\alpha\subseteq[n]\mid \alpha\not\in I_{\Delta^c_\D}\}.
\]

Since $I_{\Delta^c_\D}$ is squarefree, it has a unique primary decomposition, where the primary components are prime ideals generated by the variables in the complements of the facets (maximal faces) of $\Delta_\D$ (i.e., complements of maximal disposable sets). For a facet $\alpha\subseteq[n]$, denote the corresponding  primary component by $\p^{\overline{\alpha}}$. For example, if $n=5$ and $\alpha=x_2x_5$, then $\p^{\overline{\alpha}}=\<x_1,x_3,x_4\>$. The primary decomposition is thus 
\[
I_{\Delta^c_\D}=\bigcap_{\alpha\in\Delta_\D}\p^{\overline\alpha}=\bigcap_{\substack{\alpha\in\Delta_\D \\ \alpha\text{ maximal}}}\p^{\overline\alpha}.
\]
Over a field, being prime and being primary are equivalent properties for square-free monomial ideals. The ideal $I_{\Delta^c_\D}$ is prime if and only if it has only one primary component, which means that there is a unique maximal disposable set (i.e., a facet) $\alpha$ in $\Delta_\D$, and so
\[
I_{\Delta_\D^c}=\p^{\overline{\alpha}}=\<x_i\mid i\not\in\alpha\>.
\]
Thus the set $\G=\{x_i\mid i\not\in\alpha\}$ is a Gr\"obner basis for $I_{\Delta_\D^c}$. The converse holds as well: if a reduced Gr\"obner basis for $I_{\Delta_\D^c}$ has only single-variable monomials, then it must be prime. We summarize this next. 
\begin{theorem}\label{equiv}
The simplicial complex of disposable sets $\Delta_\D$ has a unique facet if and only if the ideal of non-disposable sets $I_{\Delta_\D^c}$ is prime.
\end{theorem}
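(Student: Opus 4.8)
The plan is to argue entirely at the level of square-free monomial ideals, using the primary decomposition recalled just above. Abbreviate $\Delta=\Delta_\D$ and $I=I_{\Delta^c_\D}$.

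For the forward implication, suppose $\Delta$ has a unique facet $\alpha$. Then the decomposition $I=\bigcap_{\beta\ \text{facet}}\p^{\overline\beta}$ collapses to the single component $I=\p^{\overline\alpha}=\langle x_i\mid i\notin\alpha\rangle$. An ideal of $\F_p[x_1,\dots,x_n]$ generated by a subset of the variables is prime, since the quotient is a polynomial ring in the remaining variables and hence a domain; so $I$ is prime.

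For the converse, suppose $I$ is prime. First I would note that the decomposition $I=\bigcap_{\alpha\ \text{facet}}\p^{\overline\alpha}$ is irredundant: distinct facets of $\Delta$ are incomparable under inclusion (a facet is a maximal face), and $\p^{\overline{\alpha}}\subseteq\p^{\overline{\beta}}$ holds for these monomial primes exactly when $\beta\subseteq\alpha$, so distinct facets yield incomparable prime components. Consequently the $\p^{\overline\alpha}$ (for $\alpha$ a facet) are precisely the minimal primes of $I$. But a prime ideal is its own unique minimal prime, so there can be only one facet. Equivalently, one may run the argument by contradiction: if $\alpha_1\neq\alpha_2$ were two facets, then $\p^{\overline{\alpha_1}}\,\p^{\overline{\alpha_2}}\subseteq\p^{\overline{\alpha_1}}\cap\p^{\overline{\alpha_2}}\subseteq I$, so primality forces $\p^{\overline{\alpha_k}}\subseteq I\subseteq\p^{\overline{\alpha_\ell}}$ for some ordering of the indices, hence $\alpha_\ell\subseteq\alpha_k$, contradicting maximality of $\alpha_\ell$.

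I do not expect a genuine obstacle here: everything needed is already assembled in the preceding discussion, namely that $I$ is square-free and therefore has a unique primary decomposition whose components are the primes $\p^{\overline\alpha}$ indexed by facets of $\Delta$, and that over a field ``primary'' coincides with ``prime'' for such ideals. The only point requiring a little care is bookkeeping with the Alexander-duality correspondence — keeping straight that facets of $\Delta_\D$ (maximal disposable sets) correspond to the minimal primes of $I_{\Delta^c_\D}$ via complementation, and that inclusions of monomial primes reverse inclusions of their index sets.
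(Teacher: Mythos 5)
Your proof is correct and follows essentially the same route as the paper: both rely on the unique primary decomposition of the square-free ideal $I_{\Delta_\D^c}$ into the primes $\p^{\overline\alpha}$ indexed by the facets of $\Delta_\D$, with primality equivalent to having a single component. Your added argument that distinct facets give incomparable minimal primes (so a prime ideal forces a unique facet) simply makes explicit a step the paper leaves implicit.
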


By the Alexander duality, the primary components of $I_{\Delta^c_\Delta}$ are in bijection with the complements of the maximal disposable sets. Such a complement $\overline{\alpha}$ is precisely a minimal subset of~$[n]$ on which a function in the model space $\Mod(\D)$ can depend. This motivates the following definition. 

\begin{definition}[\cite{JLSS}]\label{minset}
The complement $\overline{\alpha}$ of a maximal disposable set $\alpha$ in $\Delta_\D$ is called a \emph{minimal set}, or \emph{minset} for short. 
\end{definition}

Each minset is a set of variables on which a polynomial can depend based on the data, and one that is minimal with respect to inclusion. These variables also encode the wiring diagram of a minimal number of edges incident to the node under consideration. We call such wiring diagrams \emph{minimal} as well.

We will use the following tumor-suppression network mediated by epidermal derived growth factor receptor (EGFR) \cite{Steinway2016} as a running example.
We consider Boolean and non-Boolean data for the gene network of three parameters (EGFR, Rasgap, and miR221) and three variables (Rkip, Kras, and Raf1), and an outcome of this network is proliferation or suppression of a tumor.  For illustration purposes, we focus on identifying the direct regulators of Raf1 from among the candidates Rasgap, Rkip, and Kras.   %

\begin{example}\label{unsigned-egfr}
Suppose we want to determine which nodes Raf1 depends on -- Rasgap, Rkip, or Kras -- based solely on experimental data. Suppose experiments are performed to generate the following input-output data (parentheses and commas are suppressed for readability):
$$\mathcal{D}=\{(\s_1,t_1),(\s_2,t_2),(\s_3,t_3),(\s_4,t_4)\}
=\{(000,1),(101,1),(110,0),(011,1)\},$$
where $\s_i=(Rasgap, Rkip, Kras)=(x_1,x_2,x_3)$ and $t_i$ is the corresponding value of Raf1.
That is, we want to determine the minimal sets of variables that appear in the unknown function $f\colon\F_2^3\to\F_2$
which determines the behavior of Raf1 based on input from the other three nodes, and fits the experimental data, that is,
$f(000)=1,f(101)=1$, $f(110)=0$, and $f(011)=1$.

Since $t_1=t_2=t_4\neq t_3$, we compute $m(\s_1,\s_3)
=x_1x_2$, $m(\s_2,\s_3)=x_2x_3$, and $m(\s_3,\s_4)
=x_1x_3$.
The ideal of non-disposable sets is thus $I_{\Delta_\D^c}=\<x_1x_2, x_2x_3, x_1x_3\>$ and has primary decomposition $\<x_1, x_2\>\cap\<x_1,x_3\>\cap\<x_2,x_3\>$, corresponding to these minimal wiring diagrams:

\[
  \tikzstyle{v} = [draw,inner sep=0pt, minimum size=3mm] 
  \tikzstyle{activ} = [draw, -stealth]
 \begin{tikzpicture}[scale=1]
      \node (1) at (0,2) {\small Rasgap};
      \node (2) at (1,2) {\small Rkip};
      \node (3) at (2,2) {\small Kras};
      \node (i) at (1,0) {\small Raf1};
      \draw [activ] (1) to[bend right,shorten >= 2pt] (i);
      \draw [activ] (2) to[shorten >= 2pt] (i);
 \end{tikzpicture} \hspace{8mm}
 \begin{tikzpicture}[scale=1]
      \node (1) at (0,2) {\small Rasgap};
      \node (2) at (1,2) {\small Rkip};
      \node (3) at (2,2) {\small Kras};
      \node (i) at (1,0) {\small Raf1};
      \draw [activ] (1) to[bend right,shorten >= 2pt] (i);
      \draw [activ] (3) to[bend left,shorten >= 2pt] (i);
 \end{tikzpicture} \hspace{8mm}
 \begin{tikzpicture}[scale=1] 
      \node (1) at (0,2) {\small Rasgap};
      \node (2) at (1,2) {\small Rkip};
      \node (3) at (2,2) {\small Kras};
      \node (i) at (1,0) {\small Raf1};
      \draw [activ] (2) to[shorten >= 2pt] (i);
      \draw [activ] (3) to[bend left,shorten >= 2pt] (i);
\end{tikzpicture} 
\]
\end{example}

The  limited information that these experimental data support is that any two of the three nodes can influence Raf1. If, in addition, we perform an experiment where the input nodes are all expressed and Raf1 happens to also be expressed as a result, this will add to $\mathcal{D}$ the data point $(\s_5,t_5)=(111,1)$. As a result, the monomial $x_2$  will be added to $I_{\Delta_\D^c}$ whose primary decomposition now becomes $\<x_1, x_2\>\cap\<x_2,x_3\>$, eliminating the middle wiring diagram from the figure above. Since $x_2$ is in both primary ideals, we are now confident that Rkip affects Raf1. While we still do not know if Rasgap and Kras participate in the regulation of Raf1, this may be sufficient if the role of Rkip is the focus of the experimental work.

On the other hand, if instead of adding $(\s_5,t_5)=(111,1)$, we added $(\s'_5,t'_5)=(010,0)$, the new monomials added to $I_{\Delta_\D^c}$ will be not only $x_2$ but also $x_1x_2x_3$ and $x_3$. Now the primary decomposition becomes $\<x_2,x_3\>$, reducing the possible wiring diagrams to a unique one (rightmost above) and completely determining the regulation of Raf1.

This example shows that some input-output data sets result in multiple models, whereas well-chosen datasets can reduce the number of possible wiring diagrams and even lead to a unique model.

\section{Main results}
\label{sec:main}




\subsection{Theoretical Results}

The one-to-one correspondence between the minsets of $\Delta_{\mathcal{D}}$ and the minimal wiring diagrams of $\Mod(\mathcal{D})$ implies that finding input sets which uniquely identify the minimal wiring diagram underlying a system is equivalent to finding input sets $\mathcal{D}$ whose corresponding simplicial complexes $\Delta_{\mathcal{D}}$ have a unique minset.
The theory of minsets developed in \cite{JLSS, veliz-cuba-signed-ms} establishes methods for generating all minimal wiring diagrams for a given input-output data set $\mathcal{D}$. In practice, however, one does not know the experimental output~$T$ \textit{a priori}. Therefore, it is desirable to develop theory and algorithms which allow us to design experiments whose output is guaranteed to reduce the size of the wiring-diagram space of the system without making assumptions for the unknown experimental outcome. In the next section, we provide  necessary and sufficient conditions on the input data set which are computationally feasible to guarantee that the identified minset is unique regardless of the output. 

\subsubsection{Identifying input sets corresponding to a unique minset}






Based on Theorem~\ref{equiv}, our goal is to efficiently identify sets whose ideal of non-disposable sets in prime. Below we construct an algorithm for the identification of such input sets.

Let $V=\{\mathbf{s}_1,\ldots,\mathbf{s}_m\}\subseteq \F_p^n$ be an input set of distinct vectors.  We define the multiset 
\begin{equation}\label{pairs}
M=\left\{m(\s_i,\s_j)\mid  i,j\in [r],\;1\leq i<j\leq r\right\},
\end{equation}
where ${\displaystyle m(\s_i,\s_j)=\prod_{\s_{ik}\neq \s_{jk}}x_i}$ are square-free monomials which record the coordinates where each pair of points in $V$ differ. The number of pairs in this set is $|M|=(r-1)+(r-2)+\cdots +1=\frac{(r-1)r}{2}$ since, unlike in (\ref{sf-mon}), monomials are repeated if they come from different input pairs. 
For example, if $m(s_1,s_2)=m(s_2,s_6)=x_2x_5$, then $x_2x_5$ will be listed twice and it will be recorded to which input pairs it corresponds.
Let $M_{MV}$ be the list of multivariate monomials in $M$, again keeping track of which pairs of points in $V$ yielded each monomial.
For each $m(s_a,s_b)\in M_{MV}$, let $m(s_{i_1},s_{j_1}),\ldots,m(s_{i_k},s_{j_{\ell}})$ be the single-variate monomials in $M$ that divide $m(s_a,s_b)$. 

\begin{theorem}\label{unique-minset}
Let $V=\{\mathbf{s}_1,\ldots,\mathbf{s}_m\}\subseteq \F_p^n$ be an input set of distinct vectors, and $M$ and $M_{MV}$ be defined as above. Let $t_k$ denote the unknown output of $s_k$. There exists an output assignment $T$ for which $I_{\Delta^c}$ is not prime (and so there are multiple minsets) if and only if there is a monomial in $M_{MV}$ for which the following system is consistent. 

\begin{eqnarray}\label{sys}
   t_a&\ne &t_b\nonumber \\ 
   t_{i_1}&=&t_{j_1}\\
   & \vdots & \nonumber\\
   t_{i_k}&=&t_{j_{\ell}}\nonumber
\end{eqnarray}

\end{theorem}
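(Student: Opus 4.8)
The plan is to reduce primeness of $I_{\Delta^c}$ to a combinatorial condition on its minimal monomial generators and then identify that condition with solvability of~\eqref{sys}. I would first recall from the discussion preceding Theorem~\ref{equiv} that, being square-free, $I_{\Delta^c}$ is prime if and only if its unique set of minimal monomial generators (equivalently its reduced \gb, which is the same for every monomial order since the ideal is monomial) consists only of single variables; hence for a fixed output assignment $T$ the ideal fails to be prime exactly when it has a \emph{multivariate} minimal generator. Since $I_{\Delta^c}=\langle m(\s,\s')\mid t\ne t'\rangle$, every minimal generator is one of the monomials $m(\s,\s')$ with $t\ne t'$, and because each $m(\s_a,\s_b)$ with $a<b$ occurs in $M$, a multivariate minimal generator is precisely some $g=m(\s_a,\s_b)\in M_{MV}$ with $t_a\ne t_b$ none of whose proper divisors lies in $I_{\Delta^c}$. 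I would also record one elementary fact: a single variable $x_i$ lies in $I_{\Delta^c}$ if and only if some pair of points of $V$ differing only in coordinate $i$ has unequal outputs (which in particular requires $x_i\in M$), because among the generators only $x_i$ itself divides $x_i$.

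For the forward direction, I would take an output assignment $T$ making $I_{\Delta^c}$ non-prime and fix a multivariate minimal generator $g=m(\s_a,\s_b)$, so $g\in M_{MV}$ and $t_a\ne t_b$. For each point-pair $(\s_{i_\ell},\s_{j_\ell})$ of $V$ that differs in a single coordinate dividing $g$, the monomial $x_{i_\ell}=m(\s_{i_\ell},\s_{j_\ell})$ properly divides the minimal generator $g$, so $x_{i_\ell}\notin I_{\Delta^c}$ and therefore $t_{i_\ell}=t_{j_\ell}$ by the fact above; together with $t_a\ne t_b$ this shows that $T$ itself witnesses the consistency of~\eqref{sys} for $g$.

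For the converse, I would assume~\eqref{sys} is consistent for some $g=m(\s_a,\s_b)\in M_{MV}$, fix a solution, and extend it arbitrarily to a full output assignment $T$ (imposing extra output values only enlarges $I_{\Delta^c}$, which is harmless here). Then $t_a\ne t_b$ gives $g\in I_{\Delta^c}$, so $g$ is a multiple of some minimal generator $g'$. If $g'$ were a single variable $x_i$, then $i\in\supp(g)$ and $x_i\in I_{\Delta^c}$, so by the fact above some pair of points of $V$ differing only in coordinate $i$ would have unequal outputs; but such a pair is one of the $(\s_{i_\ell},\s_{j_\ell})$ appearing in~\eqref{sys} and so has equal outputs, a contradiction. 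Hence $g'$ is multivariate, $I_{\Delta^c}$ has a multivariate minimal generator and is not prime, and Theorem~\ref{equiv} then gives multiple minsets.

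The feature that keeps this from being a one-line corollary is that~\eqref{sys} constrains only the \emph{single-variate} divisors of $m(\s_a,\s_b)$, while minimality of a monomial generator a priori concerns \emph{all} of its proper divisors; this is the step I expect to be the main obstacle. The reason single variables suffice, exploited in the converse, is that one never has to exclude multivariate proper divisors of $g$ from $I_{\Delta^c}$: once no single-variate divisor of $g$ lies in the ideal, any minimal generator dividing $g$ is automatically multivariate. The one bit of care needed is multiplicity --- a single variable $x_i$ may arise from several point-pairs of $V$, so ``the single-variate monomials in $M$ dividing $g$'' must be listed with multiplicity, with one equation $t_{i_\ell}=t_{j_\ell}$ per pair, precisely because $x_i\notin I_{\Delta^c}$ requires \emph{every} pair differing only in coordinate $i$ to have equal outputs; these equations are jointly satisfiable, so this causes no difficulty.
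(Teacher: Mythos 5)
Your proof is correct and follows the same underlying idea as the paper's argument: consistency of \eqref{sys} is equivalent to the existence of a multivariate monomial in the generating set of $I_{\Delta^c}$ with no single-variate generator dividing it, which (via minimal generators of a square-free monomial ideal and Theorem~\ref{equiv}) is equivalent to non-primeness. The paper's published proof is only a two-sentence sketch of one direction, so your write-up supplies exactly the details it leaves implicit, including the observation that excluding single-variate divisors suffices because any minimal generator dividing $m(\s_a,\s_b)$ is then forced to be multivariate.
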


\begin{proof}
The system is set up so that if consistent, $\mathcal M(V)$ from (\ref{sf-mon}) will contain at least one multivariate monomial without a single-variate monomial that divides it.  In that case, the primary decomposition of the ideal generated by the monomials in $\mathcal M(V)$ will have more than one primary component.
\end{proof}

Notice that the equations in (\ref{sys}) form a homogeneous linear system whose coefficient matrix is sparse and solving it is computationally easy. 
As soon as a consistent system is found for an element in $M_{MV}$, we can stop and conclude that there exists a $T$ for which $I_{\Delta^c}$ is not prime. If no such system is found, then for any $T$, the Gr\"obner basis of $I_{\Delta^c}$ consists entirely of single-variate monomials and so $I_{\Delta^c}$ is prime for all output assignments.

To illustrate the process above consider the following examples.

\begin{example}

Consider the following non-Boolean input data for the EGFR network in~\cite{Steinway2016}, where $\s_i=(Rasgap, Rkip, Kras)=(x_1,x_2,x_3)$ and $t_i$ is the corresponding value of Raf1: $V=\{\s_1,\s_2,\s_3,\s_4\}=\{(010),(110),(210),(212)\}\subseteq \F_3^3$. The set $M$ contains the monomials
    $m(\s_1,\s_2)=x_1, m(\s_1,\s_3)=x_1, m(\s_1,\s_4)=x_1x_3, m(\s_2,\s_3)=x_1, m(\s_2,\s_4)=x_1x_3,$ $m(\s_3,\s_4)=x_3$.
    The multivariate monomials are $m(\s_1,\s_4)=x_1x_3$ and $m(\s_2,\s_4)=x_1x_3$. The two corresponding systems below are both inconsistent and so $V$ has a unique minset for any~$T$. 

\begin{multicols}{2}    
\begin{eqnarray}
   t_1&\ne &t_4\nonumber\\
   t_1&=&t_2\nonumber\\
   t_1&=&t_3\nonumber\\
   t_2&=&t_3\nonumber\\
   t_3&=&t_4\nonumber
\end{eqnarray} 

\begin{eqnarray}
   t_2&\ne &t_4\nonumber\\
   t_1&=&t_2\nonumber\\
   t_1&=&t_3\nonumber\\
   t_2&=&t_3\nonumber\\
   t_3&=&t_4\nonumber
\end{eqnarray}
\end{multicols}

The algorithm determines that regardless of the experimental output, this input set $V$ is guaranteed to result in a unique minimal wiring diagram for Raf1. (Notice that while unique for any output, the wiring diagram will vary based on the output.)

Now consider the input data set $U=\{\s_1,\s_2,\s_3,\s_4\}=\{(211),(002),(200),(201)\}\subseteq \F_3^3$. The monomials in $M$ are
    $m(\s_1,\s_2)=x_1x_2x_3, m(\s_1,\s_3)=x_2x_3, m(\s_1,\s_4)=x_2, m(\s_2,\s_3)=x_1x_2, m(\s_2,\s_4)=x_1x_2,$ and $m(\s_3,\s_4)=x_3$. 
    Based on the multivariate monomial $m(\s_1,\s_2)=x_1x_2x_3$, we form the consistent system
    $$t_1\ne t_2, \ \ t_1=t_4, \ \ t_3=t_4.$$
The algorithm identifies that there exist output assignments for which 
$I_{\Delta^c}$ is not prime. For example, $T=\{0,2,0,0\}$, i.e. $t_1=t_3=t_4=0, t_2=2$, corresponds to two minsets: $\{x_2\}$ and $\{x_3\}$; that is, we can have experimental output that will result in two possible minimal wiring diagrams for Raf1: in one Raf1 depends on Rkip only, and in the other Raf1 depends on Kras only. 

\end{example}

Having built an algorithm for identifying if an input data set $V$ corresponds to a unique minset, we next ask how a unique minset  relates to the Gr\"obner basis of $I(V)$ and to the normal form of polynomials that take $V$ as input.

\subsubsection{Polynomial normal forms and minsets}

The main result in this section is Theorem~\ref{unique-nf-minset} which establishes that a unique normal form (regardless of monomial order) of a polynomial that fits a set of input-output pairs $\mathcal D$ implies a unique minset for $\mathcal D$.

\begin{lemma}[\cite{he-unique-gbs}]\label{lma-mon}
Let $x^{\alpha},x^{\beta}$ be monomials with $x^{\alpha} \nmid x^{\beta}$. There exists a weight vector $\gamma$ and monomial order $\prec_{\gamma}$ such that $x^{\beta} \prec_{\gamma} x^{\alpha}$.
\end{lemma}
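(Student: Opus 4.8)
The plan is to build $\prec_{\gamma}$ explicitly from a single, carefully chosen weight vector, refined by an auxiliary term order. Since $x^{\alpha}\nmid x^{\beta}$, by definition there is a coordinate $i$ with $\alpha_i>\beta_i$. I would take $\gamma=e_i$, the $i$-th standard basis vector in $\mathbb{Z}_{\ge 0}^n$; then $\gamma\cdot\alpha=\alpha_i>\beta_i=\gamma\cdot\beta$, so at the level of weights $\alpha$ already dominates $\beta$. The only remaining work is to promote the partial comparison ``compare $\gamma$-weights'' to an honest monomial order.

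To do that, fix any global monomial order $\prec_{\tau}$ on $\F_p[x_1,\dots,x_n]$ (lexicographic order, for concreteness) and define $x^u\prec_{\gamma}x^v$ to mean either $\gamma\cdot u<\gamma\cdot v$, or else $\gamma\cdot u=\gamma\cdot v$ and $x^u\prec_{\tau}x^v$. I would then check the defining properties of a monomial order. Totality and antisymmetry are immediate because $\prec_{\tau}$ is total. Compatibility with multiplication holds because $u\mapsto\gamma\cdot u$ is additive, so multiplying by $x^w$ shifts both weights by $\gamma\cdot w$ and preserves strict weight inequalities, while ties are broken by $\prec_{\tau}$, which is itself multiplicative. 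Finally, $\prec_{\gamma}$ is a well-order: since $\gamma$ has nonnegative entries, each weight $\gamma\cdot u$ is a nonnegative integer, so any nonempty family of monomials has one of minimal weight, among which the well-order $\prec_{\tau}$ selects a minimum. This refinement construction is standard in the Gr\"obner basis literature, so I would either invoke it directly or include the one-line verification above.

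With $\prec_{\gamma}$ confirmed as a monomial order, the conclusion is immediate: $\gamma\cdot\beta=\beta_i<\alpha_i=\gamma\cdot\alpha$ forces $x^{\beta}\prec_{\gamma}x^{\alpha}$, and the tie-break rule is never even invoked. The only genuine obstacle is the bookkeeping that a weight order coming from the degenerate vector $e_i$ (which has zero entries) becomes a bona fide monomial order once refined by $\prec_{\tau}$ — in particular the well-ordering property, which fails for the pure $e_i$-weight order on its own; everything else reduces to a short, routine check.
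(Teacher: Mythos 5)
Your proof is correct, but it reaches the conclusion by a different construction than the paper. The paper also isolates a coordinate $j$ with $\alpha_j>\beta_j$, but then builds a single weight vector $\gamma$ whose entries are $\mathbb{Q}$-linearly independent real numbers (square roots of distinct primes, plus a large rational in slot $j$); the linear independence forces $\gamma\cdot u\neq\gamma\cdot v$ for distinct exponent vectors, so the weight comparison alone is already a total, multiplicative well-order and no tie-breaking is needed. You instead take the degenerate integer weight $\gamma=e_i$ and refine it by a fixed term order $\prec_{\tau}$, which is the standard ``weight order refined by a monomial order'' construction; your verification of totality, multiplicativity, and well-ordering (minimal weight exists because weights are nonnegative integers, then $\prec_{\tau}$ picks a minimum among the tied monomials) is sound, and the strict inequality $\gamma\cdot\beta=\beta_i<\alpha_i=\gamma\cdot\alpha$ settles the claim without ever invoking the tie-break. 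Your route is arguably more elementary and self-contained: it avoids irrational weights and the (correct but unproved in the paper) assertion that a nonzero rational together with square roots of distinct primes is $\mathbb{Q}$-linearly independent, as well as the slightly informal ``sufficiently large'' choice of $\gamma_j$. What the paper's approach buys is a monomial order determined by the weight vector alone, with no auxiliary order in the definition, which is occasionally convenient when one wants to speak purely of weight orders.
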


\begin{proof}
Let $x^{\alpha} \nmid x^{\beta}$. As $x^{\alpha} \nmid x^{\beta}$, $\alpha_j  > \beta_j$ for some coordinate $j$.  Take $\gamma$ to be a vector in $\mathbb{R}^n$ with a sufficiently large rational value in entry $j$ and square roots of distinct prime numbers elsewhere such that $\gamma \cdot \alpha > \gamma \cdot \beta$.  Then the entries of $\gamma$ are linearly independent over $\mathbb{Q}$ and so~$\gamma$ defines a weight order. Define $\prec_\gamma$ to be the monomial order weighted by $\gamma$.  It follows that $x^\beta \prec_\gamma x^\alpha$. 
\end{proof}

\begin{theorem}\label{unique-nf-minset}
Let $\mathcal{D}\subseteq \mathbb{F}^n\times\mathbb{F}$ be a data set of input-output pairs and let  $f:\mathbb{F}^n\to \mathbb{F}$ be any polynomial that fits $\mathcal{D}$. If $f$ has a unique normal form for all  Gr\"obner bases of $I(V)$, then $\mathcal{D}$ has a unique minset.
\end{theorem}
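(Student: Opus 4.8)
The plan is to argue by contraposition: assuming $\mathcal D$ has two distinct minsets $S_1\neq S_2$, I will produce two monomial orders under which $f$ has different normal forms, contradicting the hypothesis. The two orders are adapted to the two minsets.

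First I would record two facts. By Definition~\ref{minset} together with the discussion of disposable sets preceding it, each minset $S_i$ is a minimal subset of $[n]$ for which there is a polynomial $g_i$ fitting $\mathcal D$ with $\supp(g_i)\subseteq S_i$; such a $g_i$ exists precisely because $\overline{S_i}=[n]\setminus S_i$ is a maximal disposable set. Second, for a fixed monomial order the normal form of a polynomial modulo $I(V)$ depends only on its residue class in $\F[\bx]/I(V)$, so $f$ and $g_i$, both fitting $\mathcal D$ and hence congruent modulo $I(V)$, have the same normal form with respect to that order.

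The main step is the following claim. For each $i$, let $\prec_i$ be an \emph{elimination order} for the variable block $\overline{S_i}$, that is, a monomial order in which every monomial divisible by some $x_j$ with $j\notin S_i$ is larger than every monomial whose variables all lie in $S_i$ (any block lex order has this property; note that a pure weight order does \emph{not} suffice, since $S_i$-monomials can have arbitrarily large weight). I claim that $\mathrm{NF}_{\prec_i}(f)$ has all its variables in $S_i$. To prove this I would compute $\mathrm{NF}_{\prec_i}(g_i)$ instead, which equals $\mathrm{NF}_{\prec_i}(f)$. Starting from $g_i$, whose variables lie in $S_i$, the division algorithm against the reduced \gb\ of $I(V)$ repeatedly replaces a term $cm$ of the current polynomial by terms whose monomials are strictly $\prec_i$-smaller than $m$; by the elimination property, if $m$ is supported on $S_i$ then so is every such smaller monomial. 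By induction every polynomial produced during the reduction, in particular $\mathrm{NF}_{\prec_i}(f)$, has all its variables in $S_i$.

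Finally I would combine the pieces. If $f$ has a unique normal form $\bar f$ for all \gbs\ of $I(V)$, then in particular $\mathrm{NF}_{\prec_1}(f)=\mathrm{NF}_{\prec_2}(f)=\bar f$, so $\supp(\bar f)\subseteq S_1\cap S_2$. But $\bar f$ fits $\mathcal D$, so $\supp(\bar f)$ is a subset of $[n]$ supporting a fitting polynomial, and it is contained in $S_1$; since $S_1$ is a minset, no proper subset of $S_1$ supports a fitting polynomial, hence $\supp(\bar f)=S_1$, and by the same reasoning $\supp(\bar f)=S_2$. Thus $S_1=S_2$, contradicting our assumption, so $\mathcal D$ has a unique minset. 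I expect the only delicate point to be the support claim for $\mathrm{NF}_{\prec_i}$: one must use ``elimination order'' in its genuine sense and track the reduction steps carefully; once that is in place the remaining steps are immediate. (Lemma~\ref{lma-mon} is not needed on this route, although it is the natural tool for the companion Gr\"obner-basis statements.)
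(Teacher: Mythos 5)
Your proof is correct, but it takes a genuinely different route from the paper's. The paper argues directly about the unique normal form $\overline{f}$: if some fitting polynomial $h$ omitted a variable $x_i\in\supp(\overline{f})$, then $\overline{f}-h\in I(V)$ would retain every monomial of $\overline{f}$ containing $x_i$, and Lemma~\ref{lma-mon} (a weight-vector construction) is invoked to produce a monomial order under which one of these is the leading monomial of $\overline{f}-h$, hence lies in $in_\prec(I(V))$ --- contradicting the fact that $\overline{f}$ is a combination of monomials standard for every order. This shows $\supp(\overline{f})$ is contained in the support of every fitting polynomial, so it is the unique minset. You instead assume two minsets $S_1\neq S_2$, pick fitting polynomials $g_i$ supported on $S_i$, and use a block elimination order for $\overline{S_i}$ to force $\mathrm{NF}_{\prec_i}(f)=\mathrm{NF}_{\prec_i}(g_i)$ to be supported on $S_i$; uniqueness of the normal form then traps $\supp(\overline{f})$ in $S_1\cap S_2$, and minimality of each minset forces $S_1=\supp(\overline{f})=S_2$. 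Your route avoids Lemma~\ref{lma-mon} entirely and sidesteps the delicate point in the paper's argument of making one \emph{specific} monomial of $\overline{f}-h$ the leading monomial (which requires dominating all other monomials simultaneously, not just one pairwise comparison); the price is the verification, which you correctly flag, that a genuine elimination order (not a mere weight order) is needed so that every monomial below an $S_i$-monomial is again an $S_i$-monomial. Both arguments deliver the same byproduct recorded after the theorem: the support of the unique normal form is precisely the unique minset.
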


\begin{proof} Let $\overline{f}$ be the unique normal form of $f$ with respect to $I(V)$. For contradiction, suppose that there exists a polynomial $h$ that fits $\mathcal{D}$ such that supp$(\overline{f})$ contains a variable~$x_i$ which is not in supp$(h)$. Notice that $\overline{f}-h\in I(V)$ and all monomials of $\overline{f}$ that contain~$x_i$ are in 
$\overline{f}-h$. Since a monomial that contains~$x_i$ does not divide a monomial that does not contain~$x_i$, it follows by Lemma~\ref{lma-mon} that there is a monomial order $\prec$ under which some monomial~$x^{\alpha}$ of $\overline{f}-h$  which contain $x_i$ is the leading monomial of $\overline{f}-h$ and thus it is in $in_{\prec}(I(V))$. This is a contradiction since~$x^{\alpha}$ is a monomial of~$\overline{f}$ which is a linear combination of monomials that are standard with respect to any monomial order as the normal form is unique.
\end{proof}

One consequence of the previous theorem is that the support of a unique normal form is a minset.  Another is the following key condition on inputs.




\begin{corollary}\label{ugb-minset}
Let $V$ be a set of inputs. If $I(V)$ has a unique Gr\"obner basis, then for all output assignments there is a unique minset.
\end{corollary}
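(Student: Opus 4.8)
The plan is to derive this from Theorem~\ref{unique-nf-minset}. Fix an arbitrary output assignment $T=\{t_1,\ldots,t_m\}$ and set $\mathcal{D}=\{(\s_1,t_1),\ldots,(\s_m,t_m)\}$. Since $V$ is a finite subset of $\F_p^n$, interpolation over the finite field yields a polynomial $f\colon\F_p^n\to\F_p$ that fits $\mathcal{D}$. By Theorem~\ref{unique-nf-minset} it suffices to show that this $f$ has the same normal form with respect to every reduced Gr\"obner basis of $I(V)$; since $T$ was arbitrary, the corollary follows. So the whole proof reduces to a statement about $I(V)$ alone, with no reference to the output.

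The crux is to show that a unique reduced Gr\"obner basis for $I(V)$ forces the initial ideal $in_\prec(I(V))$ --- equivalently, the set of standard monomials --- to be independent of the monomial order $\prec$. Granting this, the normal form of $f$ modulo $I(V)$ is the unique element of $f+I(V)$ supported on standard monomials, hence is the same for every $\prec$, which is precisely the hypothesis of Theorem~\ref{unique-nf-minset}. As a byproduct this also shows the normal form depends only on $\mathcal{D}$ and not on the chosen representative $f$, and that its support is the unique minset, matching the remark that precedes the corollary.

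To establish order-independence of $in_\prec(I(V))$, I would argue that for each element $g$ of the common reduced Gr\"obner basis $G$ the leading monomial $\mathrm{LT}_\prec(g)$ is the same for all $\prec$. Indeed, if $\mathrm{LT}_{\prec_1}(g)=x^{\alpha}\neq x^{\beta}=\mathrm{LT}_{\prec_2}(g)$, then $x^{\alpha}$ and $x^{\beta}$ are both monic terms of $g$ and, by Lemma~\ref{lma-mon}, incomparable under divisibility; since $G$ is reduced for each order, $x^{\beta}\notin in_{\prec_1}(I(V))$ and $x^{\alpha}\notin in_{\prec_2}(I(V))$, so the two staircases of standard monomials genuinely differ. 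This is where being an ideal of points is used: $I(V)$ contains the field equations $x_i^p-x_i$, so $\F_p[\bx]/I(V)$ has $\F_p$-dimension exactly $|V|$ and the staircases are tightly constrained, and pushing the remaining elements of $G$ (which must be the same polynomials under both orders) through the two candidate staircases produces a contradiction on their monomial supports. Making this last rigidity argument airtight in full generality is the step I expect to be the main obstacle; alternatively one can cite the characterization of point configurations whose reduced Gr\"obner basis is order-independent from \cite{he-unique-gbs}, and then simply combine it with the interpolation step and Theorem~\ref{unique-nf-minset} above.
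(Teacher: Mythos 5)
Your derivation follows the same route as the paper: Corollary~\ref{ugb-minset} is presented there as a direct consequence of Theorem~\ref{unique-nf-minset}, with the interpolation step and the passage from ``unique reduced Gr\"obner basis'' to ``unique normal form of any interpolator'' left implicit, exactly as you reconstruct them. The rigidity step you flag --- that a single reduced Gr\"obner basis forces a single set of standard monomials, and hence order-independent normal forms --- is indeed the only non-immediate point, and the paper does not argue it either; in the cited literature (\cite{he-unique-gbs, robbiano}) a unique reduced Gr\"obner basis for an ideal of points is understood to come with a single common initial ideal (note that for non-zero-dimensional ideals such as $\langle x+y\rangle$ the same reduced basis genuinely can carry two different initial ideals, so your caution is warranted), and citing that characterization, as you propose, legitimately closes the argument.
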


Notice that the converse of Corollary~\ref{ugb-minset} is false. For example, $V=\{00, \allowbreak 10, \allowbreak 01, \allowbreak 11, \allowbreak 02, \allowbreak 20, \allowbreak 22\}\subseteq \mathbb{Z}_3^2$ has an ideal $I(V)$ with two Gr\"obner bases, $\{x+y,y^2-1\}$ and $\{x^2-1,y+x\}$, but~$V$ has only one minset for any output $T$.



Theorem~\ref{unique-nf-minset} and its corollaries beg the following question in algebraic design of experiments: What input-output data corresponds to a model with a unique normal form? We answer that in Theorem~\ref{unique-nf} below.

\begin{definition}
    Let $\lambda=\{u^1, \ldots, u^r\}$ be an $r$-subset of $\mathbb{N}^n_p$ and let $V=\{v^1, \ldots, v^s\}$ be an $s$-subset of $\mathbb{N}^n_p$. The \emph{evaluation matrix} $\mathbb{X}(x^{\lambda},V)$ is the $s$ by $r$ matrix whose element in position $(i,j)$ is $x^{u^j}(v^{i})$, the evaluation of $x^{u^j}$ at $v^{i}$.
\end{definition}

\begin{example}
Consider $V=\{(0,0,1), (0,1,0), (1,0,1)\}\subset \mathbb{F}_2^3$. One of its sets of standard monomials is $x^{\lambda}=\{1,z,x\}$ which corresponds to the set of exponent vectors $\lambda=\{(0,0,0), (0,0,1), (1,0,0)\}$ and produces the following evaluation matrix on $V$:
 \begin{center}
$\mathbb{X}(x^{\lambda},V)=\begin{bmatrix} 
1 & 1 & 0 \\
1 & 0 & 0 \\ 
1 & 1 & 1 \\
\end{bmatrix}.$
\end{center}
\end{example}

\begin{theorem}\label{unique-nf}
Let $\mathbb{F}$ be a field. Consider a set $V=\{s_1 \ldots, s_r\} \subseteq \mathbb{F}^n$ of distinct input vectors and an output vector $T=(t_1, \ldots, t_r )\in \mathbb{F}^r$. Let $f \in \mathbb{F}[x_1, \ldots, x_n]$ be such that $f(s_i)=t_i$ for all $i \in \{1, \ldots, r \}$. The normal form of $f$ is unique with respect to any Gr\"obner basis if and only if $T$ is a linear combination of the columns in $\mathbb{X}(x^{\lambda},V)$ that correspond to monomials which are standard with respect to any Gr\"obner basis. 
\end{theorem}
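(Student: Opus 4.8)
The plan is to characterize when the normal form of $f$ is the same for every Gröbner basis, by exploiting the linear-algebra description of normal forms via evaluation matrices. The key observation is that for any single monomial order $\prec$ with standard monomial set $x^{\lambda_\prec}$, the normal form $\overline{f}^\prec$ is the unique $\mathbb{F}$-linear combination of the monomials in $x^{\lambda_\prec}$ that agrees with $f$ on $V$; equivalently, if $c$ is the coefficient vector of $\overline{f}^\prec$ in the basis $x^{\lambda_\prec}$, then $\mathbb{X}(x^{\lambda_\prec},V)\,c = T$, and this system has a unique solution because $x^{\lambda_\prec}$ is a basis of $\mathbb{F}[x_1,\ldots,x_n]/I(V)$ and hence $\mathbb{X}(x^{\lambda_\prec},V)$ is invertible. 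So the normal form "living on" a monomial set $x^\lambda$ is exactly equivalent to $T$ being in the column span of $\mathbb{X}(x^\lambda,V)$ restricted to those columns — and since the matrix is square and invertible, $T$ is always in that span, the content being \emph{which} columns carry nonzero coefficients.

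First I would set up notation: let $S$ denote the set of monomials that are standard with respect to \emph{every} Gröbner basis (the "universal standard monomials"), and for each monomial order $\prec$ let $B_\prec$ be its standard monomial basis. Note $S = \bigcap_\prec B_\prec$. For the forward direction, assume $\overline{f}$ is the common normal form. Then $\operatorname{supp}(\overline{f}) \subseteq B_\prec$ for every $\prec$, hence $\operatorname{supp}(\overline{f}) \subseteq S$. Writing $\overline{f} = \sum_{x^u \in S} c_u x^u$ and evaluating at each $s_i \in V$ gives $\mathbb{X}(x^{\lambda_S}, V)\, c = T$ where $\lambda_S$ indexes $S$; this exhibits $T$ as a linear combination of exactly the columns of the full evaluation matrix indexed by $S$, as claimed. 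For the converse, suppose $T = \mathbb{X}(x^{\lambda},V)_{|S}\, c$ for some coefficient vector $c$ supported on $S$, and set $g = \sum_{x^u \in S} c_u x^u$. Then $g(s_i) = t_i$ for all $i$, so $f - g \in I(V)$, hence $f$ and $g$ have the same normal form with respect to any fixed Gröbner basis $G_\prec$; but $g$ is already a linear combination of monomials in $B_\prec$ (since $S \subseteq B_\prec$), so $g$ is itself reduced, i.e. $\overline{f}^\prec = g$ for every $\prec$. Thus the normal form is the single polynomial $g$ independent of $\prec$.

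The main technical point to get right — and the step I expect to require the most care — is the claim that $\mathbb{X}(x^{\lambda},V)$ restricted to the columns of a standard monomial basis $B_\prec$ is invertible, so that the "linear combination" representations above are unique and well-defined; this is the standard fact that evaluation at the $r$ distinct points of $V$ gives an isomorphism $\mathbb{F}[x_1,\ldots,x_n]/I(V) \xrightarrow{\sim} \mathbb{F}^r$ and that any monomial basis of the quotient maps to a basis of $\mathbb{F}^r$. One must also be careful that the statement's phrase "$T$ is a linear combination of the columns ... that correspond to monomials which are standard with respect to any Gröbner basis" is interpreted with "any" meaning "every" (i.e. the universal standard monomials $S$), and that $x^\lambda$ is taken large enough to include all of $S$ as columns — equivalently one fixes at the outset one standard monomial set $x^\lambda$ containing $S$, e.g. by taking $\lambda$ to index some $B_\prec$. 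A minor subtlety is that $S$ may be a proper subset of every $B_\prec$, in which case $\mathbb{X}(x^{\lambda},V)_{|S}$ is a non-square submatrix; uniqueness of $c$ then follows because it is obtained by restricting the unique solution over the full basis $B_\prec$ and observing its support lies in $S$. Once these linear-algebra facts are in hand, both implications are short, as sketched above.
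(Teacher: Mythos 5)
Your proposal is correct and follows essentially the same route as the paper's proof: both directions hinge on identifying the common normal form with a polynomial supported on the universal standard monomials $S=\bigcap_\prec B_\prec$, which is automatically reduced with respect to every Gr\"obner basis and whose evaluation on $V$ expresses $T$ as a combination of the corresponding columns. Your version is simply more explicit than the paper's (e.g., in noting that $f-g\in I(V)$ and that the restricted evaluation matrix is invertible), which fills in details the paper leaves implicit.
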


\begin{proof} First suppose that
$T$ is a linear combination of the columns 
of $\mathbb{X}(x^{\lambda},V)$ which correspond to the standard monomials in the intersection of all sets of standard monomials. Therefore, the normal form of the interpolating polynomial $f$ is also a linear combination (with the same coefficients) of standard monomials that appear in every set of standard monomials and so will not change as we change the Gr\"obner basis.

Conversely, suppose that the normal form of $f$ is unique with respect to any Gr\"obner basis. Then the normal form of $f$ is a linear combination of monomials that are standard with respect to any Gr\"obner basis and so $T$ is (the same) linear combination of the columns in the evaluation matrix that correspond to the monomials that are standard for every Gr\"obner basis.
\end{proof}

\begin{example}
Consider an input set $V = \{(0,0,1), (0,1,1), (1,0,1), (1,1,0)\} \subset \mathbb{F}_2^{3}$. $I(V)$ has exactly two distinct sets of standard monomials, namely $SM_1 = \{1,z,y,x\}$ and $SM_2 = \{1,y,x,xy\}$, resulting from different monomial orderings, with $SM_1 \cap SM_2 = \{1,x,y\}$. The evaluation matrices for each of these sets of standard monomials are
\vspace{5pt}
\begin{center}
$\begin{matrix}
SM_1 & \vline & 1 & z & y & x \\ \hline
(0,0,1) & \vline & 1 & 1 & 0 & 0 \\
(0,1,1) & \vline & 1 & 1 & 1 & 0 \\
(1,0,1) & \vline & 1 & 1 & 0 & 1 \\
(1,1,0) & \vline & 1 & 0 & 1 & 1 \\
\end{matrix}$ \\
\end{center}
and 
\begin{center}
$\begin{matrix}
SM_2 & \vline & 1 & y & x & xy \\ \hline
(0,0,1) & \vline & 1 & 0 & 0 & 0 \\
(0,1,1) & \vline & 1 & 1 & 0 & 0 \\
(1,0,1) & \vline & 1 & 0 & 1 & 0 \\
(1,1,0) & \vline & 1 & 1 & 1 & 1 \\
\end{matrix}$
\end{center}
Take, for example, the sum of the matrix columns that are the evaluations of the monomials in $SM_1 \cap SM_2 = \{1,x,y\}$: $[1, 0, 0, 1]^{T}$, i.e. one linear combination. We find a polynomial function $f \in \mathbb{F}_2[x,y,z]$ that maps each input point to the corresponding output value as follows:
\begin{center}
$\begin{matrix}
    (0,0,1) & \mapsto & 1\\
    (0,1,1) & \mapsto & 0\\
    (1,0,1) & \mapsto & 0\\
    (1,1,0) & \mapsto & 1\\
\end{matrix}$
\end{center}
We find such $f$ via, say, Lagrange interpolation, to be $f=xy+xz+yz+z$. Now we compute the normal forms of $f$ reduced by $G_1$ and $G_2$, where $G_1$ and $G_2$ are the Gröbner bases for the ideal $I(V)$ corresponding to $SM_1$ and $SM_2$, arriving at
\begin{center}
    $\overline{f}^{G_1} = \overline{f}^{G_2}= x+y+1$.
\end{center}
Since $G_1$ and $G_2$ are the only two reduced  Gröbner bases for the ideal, this normal form is unique. 

If, instead, we take the same input set $V$ and corresponding standard monomials but choose a new output vector, one that is not a linear combination of the columns corresponding to monomials that are standard with respect to any monomial ordering, we expect to find more than one distinct normal form of $f$. Consider, for example, the output vector $[0,1,1,1]^{T}$. That is, we are looking for a polynomial function $f \in  \mathbb{F}_2[x,y,z]$ which maps
\begin{center}
$\begin{matrix}
    (0,0,1) & \mapsto & 0\\
    (0,1,1) & \mapsto & 1\\
    (1,0,1) & \mapsto & 1\\
    (1,1,0) & \mapsto & 1\\
\end{matrix}$
\end{center}
This time, $f$ has two distinct normal forms,
    $$\overline{f}^{G_1}=x+y+z+1 ~~ \textrm{and}~~ \overline{f}^{G_2}=xy+x+y.$$
So as expected, $\overline{f}^{G_1} \neq \overline{f}^{G_2}$. 
\end{example}


\begin{corollary}\label{nf-ugb}
The normal form of $f \in \mathbb{F}[x_1, \ldots, x_n]$ that fits a data set with input $V=\{s_1 \ldots s_r\} \subseteq \mathbb{F}^n$ is unique for any output $T$ if and only if $I(V)$ has a unique reduced Gr\"obner basis.
\end{corollary}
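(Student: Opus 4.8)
The plan is to derive both directions from Theorem~\ref{unique-nf}, the only extra ingredient being the standard fact from the theory of ideals of points (Buchberger--M\"oller) that, since $V$ consists of $r=|V|$ distinct points, $\mathbb{F}[\mathbf{x}]/I(V)$ has $\mathbb{F}$-dimension exactly $r$ and any set of standard monomials of $I(V)$ descends to a basis of it. First I would fix a finite set $\lambda\subseteq\mathbb{N}_p^n$ whose monomials $x^\lambda$ contain every set of standard monomials of $I(V)$ occurring for some monomial order (there are only finitely many such sets), and form the evaluation matrix $\mathbb{X}(x^\lambda,V)$. For any set $SM$ of standard monomials, the $r$ columns of $\mathbb{X}(x^\lambda,V)$ indexed by $SM$ form an invertible $r\times r$ submatrix, because the standard monomials are a quotient basis and evaluation at the $r$ distinct points of $V$ is an $\mathbb{F}$-algebra isomorphism onto $\mathbb{F}^r$.

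For the forward direction, suppose $I(V)$ has a unique reduced Gr\"obner basis. Then there is a single set $SM$ of standard monomials, so by the observation above every $T\in\mathbb{F}^r$ is a linear combination of the columns of $\mathbb{X}(x^\lambda,V)$ indexed by $SM$ --- which are precisely the monomials that are standard with respect to every Gr\"obner basis. Theorem~\ref{unique-nf} then gives that the normal form of any $f$ fitting $(V,T)$ is unique, for every output $T$. (This is also clear directly: a single reduced Gr\"obner basis means a single reduction procedure, hence a single normal form.)

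For the converse I would argue by contraposition. Suppose $I(V)$ has two distinct reduced Gr\"obner bases, hence two distinct sets of standard monomials $SM_1\neq SM_2$, each of size $r$; then $SM_1\cap SM_2$ is a proper subset of $SM_1$, of size $<r$. The columns of $\mathbb{X}(x^\lambda,V)$ indexed by $SM_1$ span $\mathbb{F}^r$, while those indexed by $SM_1\cap SM_2$ span a subspace of dimension $<r$, hence a proper subspace; I would choose $T\in\mathbb{F}^r$ lying outside it. Then $T$ is not a linear combination of the columns corresponding to monomials that are standard for every Gr\"obner basis, so Theorem~\ref{unique-nf} says any $f$ fitting $(V,T)$ has more than one normal form --- an output for which uniqueness of the normal form fails.

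I expect the only real obstacle is pinning down the linear-algebra input precisely: that a full set of $r$ standard monomials evaluates to an invertible $r\times r$ matrix on $V$, equivalently that it is a basis of $\mathbb{F}[\mathbf{x}]/I(V)\cong\mathbb{F}^r$. This rests on $V$ being a set of distinct points (so $I(V)$ is radical and zero-dimensional of degree $r$); once it is established, the dimension count $|SM_1\cap SM_2|<r$ and the selection of a "bad" output $T$ are routine.
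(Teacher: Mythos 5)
Your proposal is correct and follows the route the paper intends: the paper states this corollary without proof as an immediate consequence of Theorem~\ref{unique-nf}, and your argument is exactly that derivation, with the needed linear-algebra input (each set of standard monomials gives an invertible $r\times r$ evaluation submatrix since it is a basis of $\mathbb{F}[x_1,\ldots,x_n]/I(V)\cong\mathbb{F}^r$, so two distinct standard-monomial sets force their intersection to span a proper subspace, outside of which a ``bad'' $T$ can be chosen) made explicit. The only point worth stating explicitly is the standard fact that distinct reduced Gr\"obner bases of $I(V)$ have distinct initial ideals and hence distinct sets of standard monomials, which you use implicitly in the contrapositive direction.
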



Corollaries~\ref{ugb-minset} and~\ref{nf-ugb} point towards the importance of ideals $I(V)$ that have a unique reduced Gr\"obner basis. Such ideals were studied in~\cite{he-unique-gbs, robbiano}, where sufficient conditions for $I(V)$ to have a unique reduced Gr\"obner basis were introduced; in this paper, Corollary~\ref{ugb-diag-free} is a necessary condition that depends on a special relation between the points in $V$ that we define next. 


\begin{definition}
A pair of points \(p,q \in \mathbb{F}_p^n\) form a \emph{diagonal} if $p$ and $q$ differ in at least two coordinates. We will also say that a set $V$ \emph{contains a diagonal} if there is a point $p\in V$ which forms a diagonal with all other points in $V$.
\end{definition}

\begin{theorem}\label{diagonal}
If $V$ contains a diagonal, then there exists an output assignment that corresponds to multiple minsets.
\end{theorem}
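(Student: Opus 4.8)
The plan is to exhibit one explicit output assignment $T$ for which the ideal of non-disposable sets is not prime, and then appeal to Theorem~\ref{equiv} to conclude that $\Delta_{\mathcal{D}}$ has more than one facet, i.e.\ more than one minset.

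First I would fix a point $p\in V$ that forms a diagonal with every other point of $V$ (such a $p$ exists by hypothesis; note also that a set containing a diagonal has at least two points). Define $T$ by $t_p=0$ and $t_q=1$ for every $q\in V\setminus\{p\}$. With this choice, the only pairs of input vectors in $V$ with differing outputs are the pairs $(p,q)$ with $q\neq p$, so the ideal of non-disposable sets of $\mathcal{D}=(V,T)$ is
$I_{\Delta_{\mathcal{D}}^c}=\langle\, m(p,q)\mid q\in V\setminus\{p\}\,\rangle$.

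Next I would observe that, since $p$ differs from each $q$ in at least two coordinates, every generator $m(p,q)$ is a square-free monomial of degree at least $2$. A standard fact about monomial ideals is that every monomial lying in an ideal generated by monomials of degree $\ge 2$ is itself divisible by one of those generators and hence has degree $\ge 2$; in particular $I_{\Delta_{\mathcal{D}}^c}$ contains no single variable. On the other hand, a nonzero prime monomial ideal must contain some variable dividing any given element (inductively factor a monomial in the ideal and use primality). Since $I_{\Delta_{\mathcal{D}}^c}$ is nonzero and proper, it is therefore not prime, and by Theorem~\ref{equiv} the simplicial complex $\Delta_{\mathcal{D}}$ has more than one facet; equivalently, by Definition~\ref{minset}, this $T$ yields multiple minsets.

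I do not anticipate a real obstacle here: the only step needing a line of care is the elementary remark that a nonzero prime monomial ideal contains a variable (so that ``all minimal generators have degree $\ge 2$'' forces non-primality), together with the bookkeeping that the chosen $T$ activates exactly the monomials $m(p,q)$. As an alternative route one could instead verify that the linear system~(\ref{sys}) attached to any multivariate monomial $m(p,q_0)$ is consistent for this $T$ — the equalities it imposes involve only points other than $p$, all of which have been assigned the same output — and then invoke Theorem~\ref{unique-minset}; but the direct primality argument above seems cleanest.
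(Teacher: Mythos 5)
Your proof is correct, and it reaches the same structural fact the paper relies on — that all the monomials $m(p,q)$ attached to the diagonal point $p$ are multivariate — but it packages the argument differently. The paper splits into two cases according to whether some single-variate monomial $m(s_i,s_j)$ divides $m(p,s)$, and in each case invokes the consistency criterion of Theorem~\ref{unique-minset}; its Case~2 asserts the existence of a suitable $T$ (all constrained outputs equal, $t_p\neq t_s$) without writing one down. You instead exhibit the extremal such $T$ explicitly ($t_p=0$, all other outputs $1$), observe that the resulting ideal of non-disposable sets is generated entirely by square-free monomials of degree at least $2$, and conclude non-primality directly from the fact that a nonzero proper prime monomial ideal must contain a variable, then finish with Theorem~\ref{equiv}. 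This buys you a self-contained argument with no case analysis and no dependence on Theorem~\ref{unique-minset} (whose own proof is only sketched in the paper), at the cost of needing the small lemma about prime monomial ideals — which you correctly state and which is standard. Your closing remark that the same $T$ also satisfies system~(\ref{sys}) for any multivariate $m(p,q_0)$, because every equality constraint involves only points of $V\setminus\{p\}$, is exactly the content of the paper's Case~2, so the two proofs are reconciled. The only shared loose end is the degenerate situation $|V|=1$, where ``contains a diagonal'' holds vacuously and the conclusion fails; both you and the paper implicitly assume at least one genuine diagonal pair exists, and you at least flag this explicitly.
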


\textbf{Proof:} Let $p\in V$ form a diagonal with all other points in $V$. Then there is a point $s\in V$ such that $m(p, s)$ is a multivariate monomial. Denote the corresponding outputs from $p$ and~$s$ by $t_p$ and $t_s$.

\begin{itemize}
\item[Case 1:] There are no points $s_i, s_j\in V$ such that $m(s_i, s_j)$ is a single-variate monomial that divides $m(p, s)$. Then according to Theorem~\ref{unique-minset} there is an output assignment for which there are multiple minsets.

\item[Case 2:] There are pairs of points $s_i, s_j\in V$ for which $m(s_i, s_j)$ is a single-variate monomial that divides $m(p, s)$. However, since $m(p, s_i)$ and $m(p, s_j)$ are multivariate, we know that $p\ne s_i$ and $p\ne s_j$. Therefore, one can choose an output assignment $T$ where $t_i=t_j$ for all pairs of input points $s_i, s_j\in V$ such that $m(s_i, s_j)$ is a single-variate monomial that divides $m(p, s)$, while also choosing $t_p\ne t_s$. According to Theorem~\ref{unique-minset}, there are multiple minsets for this $T$.
\end{itemize}

\begin{corollary}\label{ugb-diag-free}
If $I(V)$ has a unique reduced Gr\"obner basis, then $V$ is diagonal-free.
\end{corollary}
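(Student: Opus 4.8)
The plan is to prove the contrapositive by chaining together two results already in hand: \Cref{diagonal} and \Cref{ugb-minset}. Assume $V$ is not diagonal-free, i.e., $V$ contains a diagonal. By \Cref{diagonal}, there exists an output assignment $T$ such that the data set $\mathcal{D}=(V,T)$ has multiple minsets. Now invoke the contrapositive of \Cref{ugb-minset}: since that corollary asserts ``$I(V)$ has a unique reduced Gr\"obner basis $\Rightarrow$ every output assignment on $V$ yields a unique minset,'' its contrapositive reads ``some output assignment on $V$ yields more than one minset $\Rightarrow$ $I(V)$ does not have a unique reduced Gr\"obner basis.'' Combining the two implications gives ``$V$ contains a diagonal $\Rightarrow$ $I(V)$ does not have a unique reduced Gr\"obner basis,'' which is exactly the contrapositive of the statement to be proved.

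The execution is essentially bookkeeping, so there are only two points that warrant care. First, one must make sure the logical direction of \Cref{ugb-minset} is being used correctly: it is an implication from uniqueness of the Gr\"obner basis to uniqueness of the minset for \emph{all} outputs, and we need its contrapositive against a \emph{single} bad output $T$ — which is exactly the form \Cref{diagonal} supplies. Second, the phrasing ``$V$ is diagonal-free'' negates to ``$V$ contains a diagonal'' in the sense of the definition (some $p\in V$ forms a diagonal with every other point of $V$), and it is precisely this hypothesis that \Cref{diagonal} takes as input, so the hand-off between the two results is direct.

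I do not expect a genuine obstacle here: all of the substance has been pushed into the earlier results. The real work sits inside the proof of \Cref{diagonal} (the case split on whether the multivariate monomial $m(p,s)$ is divisible by some single-variate $m(s_i,s_j)$, using that $p$ is distinct from $s_i,s_j$ to force $t_i=t_j$ while $t_p\neq t_s$) and inside \Cref{unique-nf-minset}, which underlies \Cref{ugb-minset}. A self-contained alternative would bypass \Cref{ugb-minset} by arguing directly that a diagonal forces, for a suitable $T$, the ideal of non-disposable sets $I_{\Delta^c_{\mathcal D}}$ to have at least two primary components, and then translating non-primeness (via \Cref{equiv} and the standard-monomial description in \Cref{sec:background}) into a monomial-order dependence of the leading term ideal of $I(V)$; but this merely re-derives \Cref{ugb-minset} in passing, so the modular two-line argument above is the one I would present.
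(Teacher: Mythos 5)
Your proof is correct and is essentially the paper's own argument: the paper likewise chains Theorem~\ref{diagonal} with Corollary~\ref{ugb-minset}, merely phrasing the composition via the contrapositive of Theorem~\ref{diagonal} rather than the contrapositive of Corollary~\ref{ugb-minset}. The two are logically identical, so no further comment is needed.
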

\begin{proof} The contrapositive of Theorem~\ref{diagonal} is ``If $V$ corresponds to a unique minset for any output assignment, then $V$ is diagonal-free.'' which follows from Corollary~\ref{ugb-minset}.
\end{proof}

\section{Experimental results}
\label{sec:experiments}

%
%
%
%
%

Theorem~\ref{diagonal} suggests the following heuristic idea that we will test computationally: \textit{the smaller the Hamming distance between points in $V$, the smaller the number of minsets}. To quantify the Hamming distance between points in $V$, we use the following definition. 

\begin{definition}
Given an input set $V$, we define $d(V)$ as the average value of the Hamming distance $H(p,q)$  between distinct points~$p$ and~$q$ of~$V$. We call $d(V)$ the internal distance.
\end{definition}

\begin{example}\label{ex:small_experiments}

Consider $f:\F_2^3\rightarrow \F_2$ given by $f(x_1,x_2,x_3)=\Not{x_2} \vee x_1$ or equivalently, $f(x_1,x_2,x_3)=1+x_2+x_2x_3$. To illustrate the definition we consider two different input sets, $V_1=\{000,001,010,100\}$ and $V_2=\{000,101,110,011\}$.

The distance between points in $V_1$ is given below.
\begin{center}
$\begin{matrix}
(\mathbf{s}_1,\mathbf{s}_2)  & H(\mathbf{s}_1,\mathbf{s}_2) \\ \hline
(000,001) & 1 \\
(000,010) & 1  \\
(000,100) & 1 \\
(001,010) & 2  \\
(001,100) & 2  \\
(010,100) & 2  \\
\hline
& d(V_1) = 1.5
\end{matrix}$ \\
\end{center}

Similarly,  $d(V_2)=2$. Now, we use $f$ to generate data sets for $V_1$ and $V_2$:
$\mathcal{D}_1=\{(000,1),\allowbreak (001,1),\allowbreak (010,0),\allowbreak (100,1)\}$
and $\mathcal{D}_2=\{(000,1),\allowbreak (101,1),\allowbreak (110,0),\allowbreak (011,1)\}$. $\mathcal{D}_1$ has the unique minset $\{x_2\}$ and $\mathcal{D}_2$ has the minsets $\{x_1,x_2\}$, $\{x_1,x_3\}$, $\{x_2,x_3\}$.

In summary, $V_1$ has an internal distance of $d(V_1)=1.5$ and resulted in $\#M(V_1)=1$ minset. $V_2$ has an internal distance of $d(V_2)=2$ and resulted in $\#M(V_2)=3$ minsets. 

The following table shows the statistics of all possible input sets with 4 points (there are $\binom{2^3}{4}=70$ of them). Some of them have the same  internal distance and/or number of minsets. This is reported in the following table and a scatter plot is shown in Figure~\ref{fig:scatter_plot_example}.

\begin{table}[h]
    \centering
    \begin{tabular}{c|c|c}
$d(V)$  & $\#M(V)$ & \text{ number of such $V$'s } \\ \hline
   1.3 &  1 & 6\\
   1.5 &  1 & 8\\
   1.7 &  1 & 24\\
   1.8 &  1 & 10\\
   1.8 &  2 & 14\\
   2 &  1 & 1\\
   2 &  2 & 5\\
   2 &  3 & 2\\
\hline
    \end{tabular}
    \caption{Statistics of all 70 possible $V$'s with 4 elements, grouped by internal distance and number of minsets.}
    \label{tab:distance_numminsets_example}
\end{table}

\begin{figure}[ht]
\centering
\includegraphics[width=4in]{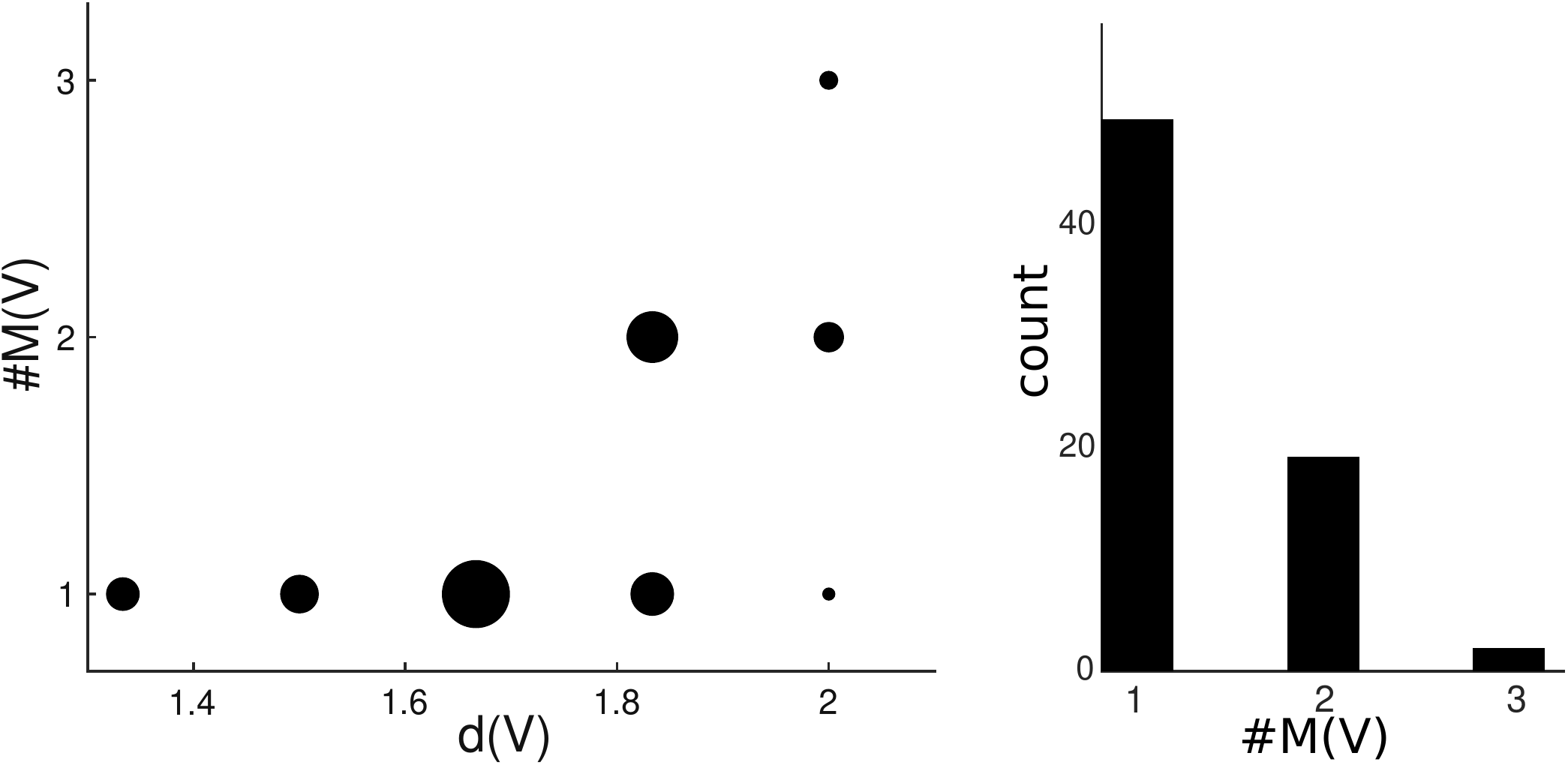}
\caption{Scatter plot of $\#M(V)$ vs $d(V)$ and histogram of $\#M(V)$ for \textbf{all} input sets with 4 points. The area of each circle corresponds to the number of $V$'s that have the same values of $d(V)$ and $\#M(V)$. We can see that as the internal distance increases, the number of minsets can get larger.}
\label{fig:scatter_plot_example}
\end{figure}
\end{example}

The results from Figure~\ref{fig:scatter_plot_example} are consistent with the heuristic idea that the smaller the distance, the smaller the number of minsets.
Now we would like to test two different strategies for generating data, one of which will tend to have small internal distance.

Consider a Boolean function $f:\F_2^n\rightarrow \F_2$. A \textit{trial} will consist of selecting an input set with~$m$ elements, $V\subseteq \F_2^n$. Then, we consider the data set $\mathcal{D}=\{(s,f(s)): s\in V\}$ and compute the minsets $M$. We are interested in the relationship between the internal distance of $V$, $d(V)$, and the number of minsets $\#M(V)$. If we plot the points $(d(V),\#M(V))$ for several trials, we expect to see some type of relationship like in Figure~\ref{fig:scatter_plot_example}. 
We used two different strategies or sampling schemes to generate the $m$ points in $V$.

\begin{itemize}
    \item 
    Pick $m$ points randomly. We refer to this as the \textit{random scheme}.
    
    \item
    Generate $m/2$ points randomly. Then, for each of those points, select a random entry to switch it. We refer to this as the \textit{small-distance scheme}.
\end{itemize}

In both cases we get an input set $V$ with $m$ points, but the small-distance scheme favours a smaller internal distance. 

The Boolean functions we selected for our analysis were \textit{fanout-free} (that is, each variable appears only once in its Boolean representation). These functions cover the vast majority of functions used in modeling \cite{mendoza2006method, sridharan2012boolean,mbodj2013logical,orlando2008global,veliz2011boolean,  giacomantonio2010boolean, helikar2015integrating, jenkins2017bistability, mbodj2013logical}. To keep the simulations tractable, we used Boolean functions $f:\F_2^{10}:\rightarrow\F_2$ such that $|supp(f)|\leq 4$. Up to a relabeling of variables and states, there are 9 such functions (not counting the constant functions), given in Table~\ref{tab:all_functions}.

\begin{table}[h]
    \centering
    \begin{tabular}{l|l}
    \textbf{Function in polynomial form} & \textbf{Function in Boolean form}\\ \hline
$x_1$ & $x_1$\\
$x_1 x_2$ & $x_1\wedge x_2$\\
$x_1 x_2 x_3$ & $x_1\wedge x_2\wedge x_3$\\
$x_1 (x_2 + x_3 + x_2 x_3)$ & $x_1\wedge (x_2 \vee x_3)$\\ 
$x_1 x_2 x_3 x_4$ & $x_1\wedge x_2\wedge x_3\wedge x_4$ \\
$x_1x_2x_3 + x_4 +x_1x_2x_3x_4$ & $(x_1\wedge x_2 \wedge x_3)\vee x_4$ \\
$x_1x_2x_3x_4 + x_1x_2x_3 + x_1x_2x_4 + x_1x_2 + x_3x_4 + x_3 + x_4$ & $(x_1\wedge x_2)\vee x_3 \vee x_4$ \\
$x_1x_2+x_3x_4+x_1x_2x_3x_4$ & $(x_1\wedge x_2)\vee (x_3 \wedge x_4)$ \\
$(x_1x_2+x_3+x_1x_2x_3 )x_4$ & $((x_1\wedge x_2)\vee x_3)\wedge x_4$
    \end{tabular}
    \caption{Boolean functions used for the computational analysis in Figure~\ref{fig:scatter_plot_9fun}. These represent all fanout-free functions with up to four variables.
    }
    \label{tab:all_functions}
\end{table}


The results of the simulations are shown in Figure~\ref{fig:scatter_plot_9fun}. As expected, the internal distance $d(V)$ is smaller when points are generated using the small-distance scheme (blue). Importantly, in the small-distance and random schemes, the smaller the internal distance, the smaller the number of minsets.  The histograms compare the number of minsets for both schemes and clearly show that the small-distance scheme results in a smaller number of minsets.
These computational results provide a straightforward way to generate data with a small number of minsets, the small-distance scheme.

\begin{figure}[ht]
\centering
\includegraphics[width=3in]{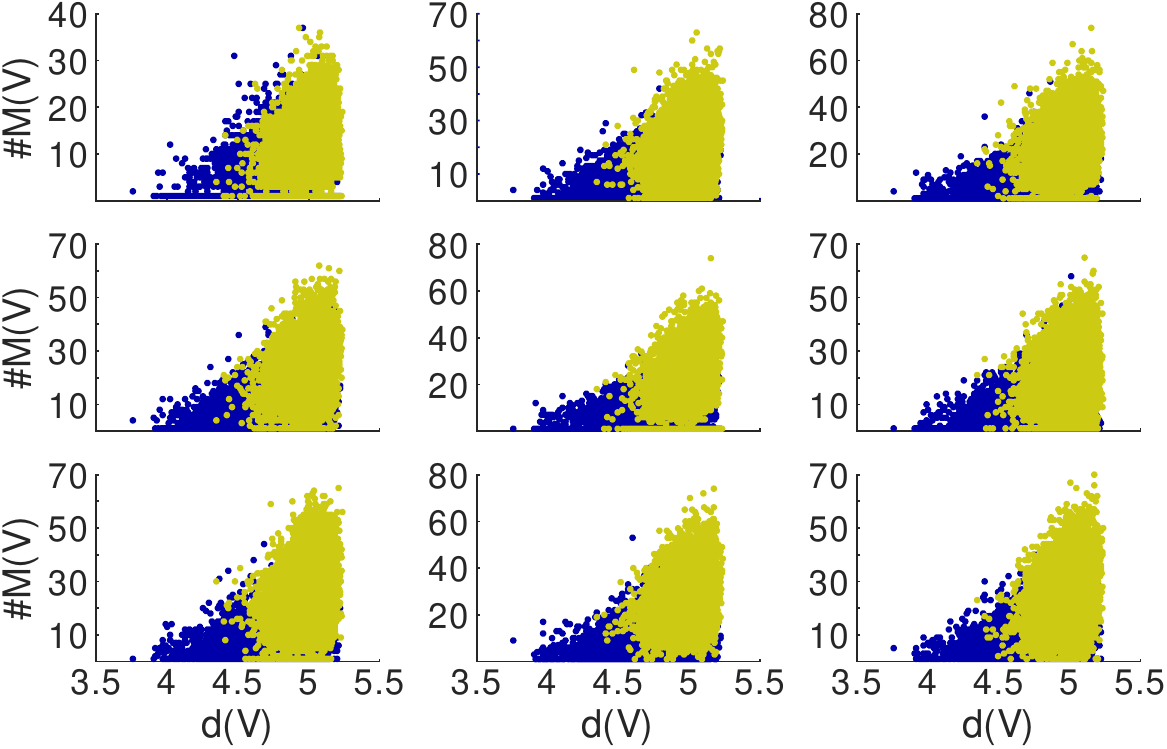}
\includegraphics[width=3in]{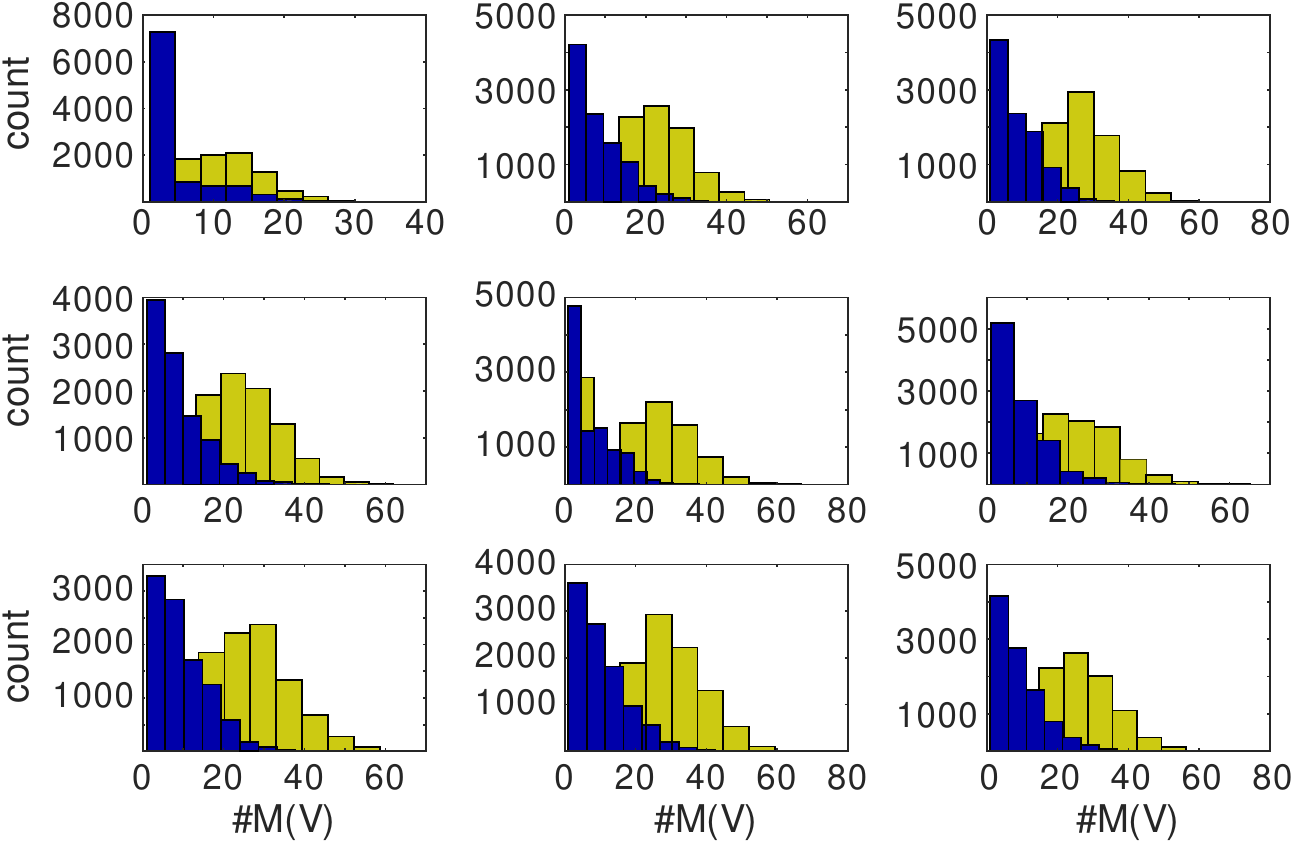}
\caption{
Scatter plots of $\#M(V)$ vs $d(V)$ and histograms of $\#M(V)$ for the functions in Table~\ref{tab:all_functions}. The scatter plots show that as the internal distance increases, the number of minsets can get larger (blue: small-distance scheme, yellow: random scheme). The histograms show that the small-distance scheme results in an overall smaller number of minsets. For each of the Boolean functions we run 10,000 trials with input sets with $m=20$ elements (about $2\%$ of the $2^{10}$ possible points). 
}
\label{fig:scatter_plot_9fun}
\end{figure}

\section{Conclusions and future work}
\label{sec:conclusions}
One of the difficulties in data-driven approaches is that there is typically a large number of models that fit the collected data and the known constraints of the system are not sufficient to reduce the pool of candidate models to a manageable size for testing and validation purposes.  As each model contains a set of predictions about the network being studied, even small numbers of competing models result in a combinatorial growth in validation experiments to be performed. Thus it is desirable to design experiments in such a way that maximizes the chance that the outputs will increase our understanding of the system. We introduced a method which generates data sets that are guaranteed to result in a unique minimal 
wiring diagram regardless of what the experimental outputs are. A natural next step is to extend these results to signed minimal wiring diagrams and address the question of existence for this case. The somewhat surprising connection between uniqueness of interpolating polynomial normal forms and unique minsets (i.e. unique minimal wiring diagrams) elucidate the role of polynomial ideals with unique Gr\"obner bases. While partial results are available in our prior work and in this manuscript, a complete geometric or combinatorial characterization of sets $V\subset \mathbb F_p^n$ such that $I(V)$ has a unique reduced Gr\"obner basis is still an open question whose importance has been emphasized in this work.




\bibliographystyle{siamplain}
\bibliography{refs}

\end{document}